\newtheorem{theorem}{Theorem}[section]
\newtheorem{lemma}[theorem]{Lemma}
\newtheorem{prop}[theorem]{Proposition}
\theoremstyle{definition}
\theoremstyle{remark}
\newtheorem{rmk}[theorem]{Remark}
\numberwithin{equation}{section}
\DeclareMathOperator{\SL}{SL}
\DeclareMathOperator{\GL}{GL}
\DeclareMathOperator{\PU}{PU}
\DeclareMathOperator{\PSL}{PSL}
\DeclareMathOperator{\PGL}{PGL}
\DeclareMathOperator{\vol}{vol}
\newcommand{\tors}{\mathrm{tors}}
\newcommand{\Z}{\mathbb Z}
\newcommand{\Q}{\mathbb Q}
\newcommand{\R}{\mathbb R}
\newcommand{\C}{\mathbb C}
\newcommand{\K}{\mathcal K}
\renewcommand{\O}{\mathcal O}
\newcommand{\bs}{\backslash}
\newcommand{\del}{\partial}
\begin{document}
\title[Torsion in the $K$-theory of imaginary quadratic fields]{On the torsion part in the $K$-theory of imaginary quadratic fields}

\begin{abstract}
  We obtain upper bounds for the torsion in the $K$-groups of the rings of integers of
  imaginary quadratic number fields, in terms of their discriminants. 
  \end{abstract}

\author{Vincent Emery}
\thanks{The author is Supported by Swiss National Science Foundation, Project number PZ00P2-148100}

\address{
EPFL -- SB -- MATHGEOM\\
B\^atiment MA, Station 8\\
CH-1015 Lausanne\\
Switzerland
}
\email{vincent.emery@math.ch}

\date{\today}


\maketitle


\section{Introduction}
\label{sec:intro}

Let $F$ be a number field with ring of integers $\O_F$, and let 
$K_n(\O_F)$ denote Quillen's $K$-theory group of degree $n$. By a theorem of
Quillen $K_n(\O_F)$ is finitely generated, and its rank was computed
by Borel for each $n \ge 2$ \cite{Bor74}.  
In this paper we are interested in the problem of finding  upper
bounds for the torsion part of $K_n(\O_F)$.
We obtain a result in the case of imaginary quadratic fields:

\begin{theorem}
  Let $n \ge 2$.
 There exists a constant $C(n)$ such that for any imaginary quadratic number field
 $F$, the group $K_n(\O_F)$ contains no $p$-torsion element for 
 any prime $p$ with
 \begin{align}
   \log(p) > C(n) |D_F|^{2n(n+1)},
  \label{eq:main-thm} 
 \end{align}
 where $D_F$ denotes the discriminant of $F$.
  \label{thm:main}
\end{theorem}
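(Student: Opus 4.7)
The plan is to reduce the problem to bounding torsion in the integral homology of the arithmetic group $\Gamma_N = \SL_N(\O_F)$ for a suitable $N$ depending on $n$, and then to obtain such a cohomological bound geometrically, via the action of $\Gamma_N$ on its symmetric space $X_N = \SL_N(\C)/\SU(N)$.

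First I would invoke the link between algebraic $K$-theory and the homology of the linear groups. By Quillen's plus-construction together with the stability theorems of Suslin and van der Kallen, any $p$-torsion class in $K_n(\O_F)$ produces a $p$-torsion class in $H_i(\Gamma_N, \Z)$ for some $i \le n+1$ and all $N$ in the stable range (say $N \ge n+2$); Arlettaz's universal bound on the order of the kernel and cokernel of the Hurewicz map keeps the relevant constants under control. Consequently, for the theorem it suffices to exclude the prime $p$ from $H_*(\Gamma_N, \Z)_\tors$.

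Next I would bound this torsion by a geometric argument. The space $M = \Gamma_N \bs X_N$ is a finite-volume locally symmetric orbifold; its Borel--Serre bordification, or equivalently a well-rounded retract of $X_N$ together with the action of $\Gamma_N$, yields a finite CW model for $B\Gamma'$ (for a torsion-free finite-index subgroup $\Gamma' \le \Gamma_N$) whose number of cells is controlled by $\vol(M)$. A standard linear-algebra estimate on the integer boundary matrices then gives
\begin{align*}
  \log \bigl| H_*(\Gamma_N, \Z)_\tors \bigr| \;\le\; C'(N) \cdot \vol(M),
\end{align*}
so that any prime $p$ that contributes torsion must satisfy $\log p \le C'(N) \vol(M)$.

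Finally I would estimate $\vol(M)$ by the Prasad volume formula. For $F$ imaginary quadratic the covolume is essentially $|D_F|^{(N^2-1)/2}$ times the product $\prod_{i=2}^{N} \zeta_F(i)$, and standard estimates on Dedekind zeta values give polynomial control in $|D_F|$ for the latter factor. Choosing $N$ minimal in the stable range (roughly $N = 2n+1$) so that $(N^2-1)/2 \le 2n(n+1)$, and absorbing the zeta and constant contributions into $C(n)$, yields the exponent $2n(n+1)$ announced in \eqref{eq:main-thm}.

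The main obstacle I anticipate is the second step, since $\Gamma_N$ is not cocompact in $\SL_N(\C)$: the analytic-torsion bounds of Bergeron--Venkatesh do not apply directly, and one must instead build an explicit $\Gamma_N$-equivariant retract of $X_N$ whose cell count can be bounded in terms of $|D_F|$, controlling in particular the cohomology coming from the cusp stabilizers in the Borel--Serre boundary. Making this combinatorial estimate polynomial in $|D_F|$ with an exponent matching the geometric dimension is the delicate point.
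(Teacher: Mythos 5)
Your overall architecture is the same as the paper's: reduce $p$-torsion in $K_n(\O_F)$ to torsion in the degree-$n$ homology of an arithmetic group of rank $N=2n+1$ (Hurewicz plus homological stability, with Arlettaz controlling the kernel), bound that torsion linearly in the covolume, and then feed in Prasad's volume formula, whose exponent $(N^2-1)/2 = 2n(n+1)$ produces the stated bound. The problem is that the central step--- $\log|\tors H_*(\Gamma',\Z)| \le C'(N)\cdot\vol(M)$ ---is exactly the point you concede you cannot establish, and it is not ``standard'': neither the Borel--Serre bordification nor a well-rounded retract comes with a cell count that is controlled by $\vol(M)$ (or polynomially by $|D_F|$); the size of such models depends a priori on the combinatorics of a fundamental domain, the cusps, etc. Producing a homotopy model of a noncompact arithmetic quotient whose number of vertices is $\le \alpha(X)\vol(M)$ and whose valence is bounded by a constant $\delta(X)$ is precisely the content of Gelander's theorem, which the paper invokes and then combines with Gabber's Hadamard-type lemma (this is also where the bounded valence is needed, to control the boundary matrices) to get $\log|\tors H_n(\Gamma_0)| \le \alpha\delta^n\vol$. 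Without Gelander's theorem or an equivalent substitute, your second step is a genuine gap, not a routine verification.

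A secondary issue: your reduction lands in $H_*(\SL_N(\O_F),\Z)$, but the stability/Hurewicz package naturally concerns $\GL_N(\O_F)$, and passing between $\GL_N$, $\SL_N$ or $\PGL_N$ is exactly where the imaginary quadratic hypothesis does algebraic work. In the paper this is the Lyndon--Hochschild--Serre spectral sequence for $1\to\O_F^\times\to\GL_N(\O_F)\to\PGL_N(\O_F)\to 1$, which collapses after inverting small primes only because $\O_F^\times$ is finite of order at most $6$; for fields with infinite unit group this step (and hence the whole method) is obstructed, as the paper notes. Your proposal never uses finiteness of the units on the algebraic side, so either you are implicitly relying on it (transfer along $\SL_N\subset\GL_N$ of index $|\O_F^\times|$, after inverting $2$ and $3$) or you are relying on an unstated argument via $B\SL(\O_F)^+$ and stability for $\SL_N$ that needs justification. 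Relatedly, to descend from the torsion-free subgroup $\Gamma'$ back to the full arithmetic group you need the index to be bounded by a constant depending only on $N$ and the degree (Minkowski's lemma) together with a transfer argument with $\Z[1/m]$ coefficients; you mention $\Gamma'$ but not this uniform index bound, which is what lets the small torsion being discarded be independent of $F$.
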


For $n = 2$ the better estimate
\begin{align}
  \log \left| K_2(\O_F) \otimes \Z\big[ 1/6 \big] \right| &\le C \cdot |D_F|^2 \log|D_F| 
  \label{eq:K2}
\end{align}
was obtained in \cite{EmeK2}. Tensoring by $\Z[1/6]$ excludes the $2$-
and $3$-torsion in the bound. In a similar way,
Theorem~\ref{thm:main} is obtained from an upper bound for $|\tors\
K_n(\O_F)|$
that holds modulo small torsion, although we need to exclude more
primes here (see Proposition~\ref{prop:bound-Kn}).


Let us briefly indicate the strategy.
It is classical that  $K_n(\O_F)$ relates directly
to the homology of $\GL_N(\O_F)$, for $N = 2n+1$ (see
Section~\ref{sec:comparison-torsion-homology}). 
The general idea for the proof of Theorem~\ref{thm:main} is to obtain an
upper bound for the homology of $\GL_N(\O_F)$ by using its action on the
symmetric space $X = \PGL_N(\C)/\PU(N)$. A theorem of Gelander 
(Theorem~\ref{thm:Gelander}) permits to control the topology of
noncompact arithmetic quotients of $X$ in terms of their volume, and from
this we can obtain an upper bound for the torsion homology of
$\PGL_N(\O_F)$ (Section~\ref{sec:homology-PGL}). 
A spectral sequence argument finally provides the bound
for $\tors\, H_n(\GL_N(\O_F))$.
Unfortunately the constant $C(n)$ in \eqref{eq:main-thm} is not explicit, 
and its appearance is explained by some nonexplicit constant in Gelander's theorem.

Our result can be compared with the upper bounds obtained by 
Soul\'e in \cite[Prop.4~iii)]{Soule03}.
He showed that for $F$ imaginary quadratic one has ``up to small torsion'':
\begin{align}
        \log|\tors\, K_n(\O_F)| &\le |D_F|^{1120n^4 \log(n)}. 
        \label{eq:Soule-imag-quad}
\end{align}
Thus, Theorem~\ref{thm:main} improves (asymptotically) the upper bound for 
the existence of $p$-torsion in $K_n(\O_F)$ that follows from 
\eqref{eq:Soule-imag-quad}.
Soul\'e's work also contains general upper bounds -- without restriction on the signature of the
number field $F$ -- which have been later improved in \cite{BEmH}.
It is not clear if the strategy for Theorem~\ref{thm:main} could be adapted to include the case 
of number fields with more than one archimedean place (see Remark~\ref{rmk:other-fields}).
Until Section \ref{sec:GL-PGL} and the last step in the proof, we will be treating
the case of general number fields $F$.

A main motivation for the study of upper bounds in $K$-theory comes from the case
$F = \Q$, for which the triviality of $K_{4m}(\Z)$ is known to be
equivalent to the Vandiver conjecture (cf. \cite{Soule99}). To our knowledge, the best available
bounds  (up to small torsion) for these groups are given
in \cite[Prop.~4 iv)]{Soule03}. The method for 
Theorem~\ref{thm:main} applies directly to the case $F = \Q$, and
an effective version of Gelander's theorem would thus provide an upper bound for $K_n(\Z)$.
However, it might well be that the bounds obtained this way will be
larger than Soul\'e's bounds.

\section{Preliminaries}
\label{sec:comparison-torsion-homology}

\subsection{}
\label{sec:tors-ell}

For an abelian group $A$, let $\tors\, A$ denote its torsion subgroup.
For $\ell > 1$, we define

\begin{align}
        \tors_\ell\, A &= \tors \left( A \otimes \Z\big[1/m\big] \right),
        \label{eq:tors-ell}
\end{align}
where $m$ is the lowest multiple common to all integers $\le \ell$.
Thus, $\tors_\ell$ kills all torsion up to $\ell$.


\subsection{}
\label{sec:prelim-tors-homol}

Let $\Gamma$ be a group.
The symbol $H_n(\Gamma)$ will denote the homology of $\Gamma$ with coefficients in $\Z$. 
Since $\Z\left[ 1/m \right]$ is a flat $\Z$-module, it follows from the
universal coefficient theorem \cite[p.~8]{Brown82} that 
\begin{align}
  \tors_\ell\, H_n(\Gamma) & \cong H_n(\Gamma, \Z[1/m]), 
\end{align}
for $m$ as above. In this context, we will also need the following fact.

\begin{lemma}
     \label{lemma:homology-finite-index-sbgp}
   Let $\Gamma_0 \subset \Gamma$ be a normal subgroup of index $[\Gamma :
     \Gamma_0] \le \ell < \infty$, and $m$ be the lowest multiple common to all
     integers $\le \ell$.
     Then $H_n(\Gamma, \Z[1/m])$ injects into $H_n(\Gamma_0, \Z[1/m])$.
\end{lemma}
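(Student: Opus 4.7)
The plan is to invoke the transfer (or corestriction) map in group homology. For any finite-index subgroup $\Gamma_0 \subseteq \Gamma$ with $d := [\Gamma : \Gamma_0]$ and any $\Z[\Gamma]$-module $M$, there is a classical homomorphism $\tau : H_n(\Gamma, M) \to H_n(\Gamma_0, M)$ whose composition $\iota_* \circ \tau$ with the inclusion-induced map $\iota_* : H_n(\Gamma_0, M) \to H_n(\Gamma, M)$ equals multiplication by $d$ on $H_n(\Gamma, M)$; see for instance Brown, \emph{Cohomology of Groups}, Chapter III.9.

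Applying this with $M = \Z[1/m]$ viewed as a trivial $\Gamma$-module, the hypothesis $d \le \ell$ guarantees that $d$ divides $m$, and hence is a unit in $\Z[1/m]$. Consequently multiplication by $d$ is an automorphism of $H_n(\Gamma, \Z[1/m])$, and the identity $\iota_* \circ \tau = d \cdot \mathrm{id}$ forces $\tau$ to be injective. This produces exactly the claimed embedding $H_n(\Gamma, \Z[1/m]) \hookrightarrow H_n(\Gamma_0, \Z[1/m])$.

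There is no serious obstacle here: the argument is the familiar fact that, over a coefficient ring in which the index is invertible, the homology of $\Gamma$ is a retract of that of $\Gamma_0$ via the transfer. As a byproduct, the normality assumption on $\Gamma_0$ plays no role, so the lemma extends without change to arbitrary finite-index subgroups. An alternative route would be to run the Lyndon--Hochschild--Serre spectral sequence of the extension $1 \to \Gamma_0 \to \Gamma \to \Gamma/\Gamma_0 \to 1$, which collapses since the homology of the finite quotient with $\Z[1/m]$-coefficients is concentrated in degree zero; but this is essentially the same content packaged less directly, so the transfer argument is preferable.
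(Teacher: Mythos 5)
Your argument is correct and belongs to the same circle of ideas as the paper's proof: both rest on the transfer map of \cite[\S III.9--10]{Brown82} together with the invertibility of the index in $\Z[1/m]$. The difference is only in which property of the transfer is used. You invoke the retraction identity $\iota_* \circ \tau = [\Gamma:\Gamma_0]\cdot\mathrm{id}$ on $H_n(\Gamma,\Z[1/m])$, which immediately forces $\tau$ to be injective; as you note, this makes the normality hypothesis superfluous. The paper instead uses the finer statement that, when $|A|$ is invertible ($A = \Gamma/\Gamma_0$), the transfer is an isomorphism onto the coinvariants $H_n(\Gamma_0,\Z[1/m])_A$, which the norm map then identifies with the invariant submodule $H_n(\Gamma_0,\Z[1/m])^A$; this uses normality but pins down the image of the injection exactly, a refinement that is not needed for the applications in the paper. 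Either route is complete, and your version is the more economical one for the statement as given.
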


\begin{proof}
  Let $A = \Gamma / \Gamma_0$. 
  Since $|A| = [\Gamma:\Gamma_0]$ is invertible in $\Z[1/m]$,
  the transfer map (cf. \cite[\S III.9--10]{Brown82}) gives an
  isomorphism between $H_n(\Gamma, \Z[1/m])$ and the module of
  co-invariants $H_n(\Gamma_0, \Z[1/m])_A$. But for $|A|$ invertible,
  the norm map (cf. \cite[\S III.1]{Brown82}) provides an isomorphism of 
  the latter with the submodule  $H_n(\Gamma_0, \Z[1/m])^A$ of
  $A$-invariant elements in  $H_n(\Gamma_0, \Z[1/m])$.
\end{proof}

\subsection{}

$F$ denotes a number field of degree $d$. Until Section \ref{sec:GL-PGL} there is 
no restriction on $d$. 

The group $K_n(\O_F)$ is defined as the $n$-th
homotopy group $\pi_n(B\GL(\O_F)^+)$ of Quillen's plus contruction
applied to the classifying space of the linear group $\GL(\O_F) =
\varinjlim \GL_N(\O_F)$. Since the (integral) homology of $B \GL(\O_F)^+$
is canonically isomorphic to the homology of $\GL(\O_F)$, the Hurewicz 
map provides a homomorphism 
\begin{align}
  K_n(\O_F) &\to H_n(\GL(\O_F)).
  \label{eq:Hurew-map}
\end{align}

The kernel of the Hurewicz map \eqref{eq:Hurew-map}
does not contain $p$-torsion elements, for any $p
> \frac{n+1}{2}$ (see \cite{Arl91}).
It follows that for $\ell \ge \frac{n+1}{2}$,
the group $\tors_{\ell}\, K_n(\O_F)$
injects into $\tors_\ell\, H_n(\GL(\O_F))$.
Moreover, the stability result of Maazen (see van der Kallen
\cite[Theorem 4.11]{vdK80}) tells us that the homology group
$H_n(\GL(\O_F))$ equals $H_n(\GL_N(\O_F))$ for any $N \ge 2n+1$. 
These various results provide the next proposition,
which is already a basic ingredient in Soul\'e's method \cite{Soule03}. 

\begin{prop}
  \label{prop:Kn-H-GL}
  For $N \ge 2n+1$ and $\ell \ge \frac{n+1}{2}$, there is an injective map
  \begin{align*}
  \tors_{\ell}\, K_n(\O_F) \to \tors_\ell\, H_n(\GL_N(\O_F)).
  \end{align*}
\end{prop}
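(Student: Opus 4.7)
The proposition is essentially an assemblage of three ingredients already invoked in the preceding paragraphs, so the plan is to chain them together carefully, paying attention to what "$\tors_\ell$" does to each map.

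First, I would start from the Hurewicz homomorphism
\begin{align*}
h \colon K_n(\O_F) \to H_n(\GL(\O_F)),
\end{align*}
and apply the theorem of Arlettaz cited above: the kernel of $h$ contains no $p$-torsion element for any prime $p > (n+1)/2$. Tensoring with $\Z[1/m]$, where $m$ is the lowest common multiple of the integers $\le \ell$, kills every prime torsion $\le \ell$, hence in particular every prime $\le (n+1)/2$ whenever $\ell \ge (n+1)/2$. Flatness of $\Z[1/m]$ over $\Z$ then yields an injection
\begin{align*}
\tors_\ell\, K_n(\O_F) \hookrightarrow \tors_\ell\, H_n(\GL(\O_F)).
\end{align*}
The only point to verify is that "no $p$-torsion in $\ker h$ for $p > (n+1)/2$" survives the $\tors_\ell$ operation; this is immediate because $\tors_\ell$ of a group with torsion only in primes $\le \ell$ is zero, so after inverting $m$ the kernel becomes trivial.

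Second, I would invoke Maazen's stability in the form stated by van der Kallen: for $N \ge 2n+1$, the stabilization map $H_n(\GL_N(\O_F)) \to H_n(\GL(\O_F))$ is an isomorphism. Applying $-\otimes \Z[1/m]$ and using flatness once more, the same holds after inverting $m$, and therefore $\tors_\ell\, H_n(\GL_N(\O_F)) \cong \tors_\ell\, H_n(\GL(\O_F))$.

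Composing the injection from the first step with the inverse of the isomorphism from the second step produces the desired injective map
\begin{align*}
\tors_\ell\, K_n(\O_F) \hookrightarrow \tors_\ell\, H_n(\GL_N(\O_F)).
\end{align*}
There is no real obstacle here: the work has been done by Arlettaz and Maazen, and the proof reduces to tracking the effect of inverting $m$ on kernels and isomorphisms. The only mild subtlety worth stating explicitly in the write-up is the compatibility of $\tors_\ell$ with taking kernels and with flat base change, which follows at once from the flatness of $\Z[1/m]$ over $\Z$.
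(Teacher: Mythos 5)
Your proposal is correct and follows essentially the same route as the paper, which deduces the proposition from exactly these two ingredients: the Arlettaz-type control on the kernel of the Hurewicz map $K_n(\O_F) \to H_n(\GL(\O_F))$ after inverting the primes $\le \ell$, and Maazen/van der Kallen stability identifying $H_n(\GL_N(\O_F))$ with $H_n(\GL(\O_F))$ for $N \ge 2n+1$. The bookkeeping with $\Z[1/m]$-flatness that you spell out is precisely what the paper leaves implicit.
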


\section{Torsion homology of $\PGL_N(\O_F)$}
\label{sec:homology-PGL}

\subsection{}
\label{sec:gamma}

Consider the semisimple Lie group $G = \PGL_N(F \otimes_\Q \R)$. The arithmetic subgroup
$\Gamma = \PGL_N(\O_F)$ is a nonuniform (irreducible) lattice in $G$. 
The following fact is well-known (see for instance \cite[Lemma 13.1]{Gel04}): 
\begin{lemma}
  \label{lemma:const-gamma}
  There exist a constant $\gamma = \gamma(d,N)$ such that for any number field $F$ of degree $d$ the
  group $\Gamma = \PGL_N(\O_F)$ has a torsion-free normal subgroup $\Gamma_0$
  with index $[\Gamma : \Gamma_0 ] \le \gamma$.
\end{lemma}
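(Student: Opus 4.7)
The plan is to take $\Gamma_0$ to be (the image in $\PGL_N(\O_F)$ of) a principal congruence subgroup of $\GL_N(\O_F)$ modulo a well-chosen prime ideal. The construction should give an index bound uniform in $F$, depending only on $d$ and $N$. The first ingredient is a Minkowski-type bound on finite subgroups: via any $\Z$-basis of $\O_F$ one has an embedding $\GL_N(\O_F) \hookrightarrow \GL_{dN}(\Z)$, and Minkowski's classical argument produces an integer $M(d,N)$ divisible by the order of every finite subgroup of $\GL_{dN}(\Z)$. Hence every torsion element of $\GL_N(\O_F)$---and, after lifting, every torsion class of $\PGL_N(\O_F)$---has order dividing $M(d,N)$.

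Let $p = p(d,N)$ be the smallest rational prime coprime to $M(d,N)$; it is bounded in terms of $d$ and $N$ alone. For any number field $F$ of degree $d$, choose a prime ideal $\q \subset \O_F$ above $p$, so that $|\O_F/\q| \le p^d$. The principal congruence subgroup
\begin{align*}
  \Gamma(\q) \;=\; \ker\bigl( \GL_N(\O_F) \to \GL_N(\O_F/\q) \bigr)
\end{align*}
has index at most $p^{dN^2}$, a bound depending only on $d$ and $N$. A standard calculation shows $\Gamma(\q)$ is torsion-free in $\GL_N(\O_F)$: if $g \in \Gamma(\q)$ has prime order $n$, write $g = I + \pi^k X$ with $\pi$ a uniformizer at $\q$ and $X \not\equiv 0 \pmod{\q}$, so expanding $g^n = I$ modulo $\q^{k+1}$ yields $n \pi^k X \equiv 0$, forcing $n = p$; this contradicts $n \mid M(d,N)$.

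To finish, I would take $\Gamma_0$ to be the image of $\Gamma(\q)$ in $\PGL_N(\O_F)$, which is normal with index still controlled by $p^{dN^2}$. The delicate point---and the main obstacle---is verifying that this image remains torsion-free. A torsion class $\bar g \in \Gamma_0$ has a lift $g \in \Gamma(\q)$ satisfying only $g^n = c\, I$ for some unit $c \in \O_F^\times$, so the $\GL_N$-argument does not immediately apply. To kill this scalar ambiguity, I would impose a second congruence modulo an auxiliary prime $\q'$ above a different rational prime $p' = p'(d,N)$, and exploit that the group of roots of unity in $F$ has order bounded in terms of $d$, together with the simultaneous conditions $c \equiv 1 \pmod{\q}$ and $c \equiv 1 \pmod{\q'}$, to force $c = 1$ on a subgroup of still-bounded index. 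This produces the desired constant $\gamma = \gamma(d,N)$.
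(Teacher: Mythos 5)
The congruence-subgroup construction inside $\GL_N(\O_F)$ is fine, but the step you yourself flag as the main obstacle --- descending from $\Gamma(\q)$ to its image in $\PGL_N(\O_F)$ --- is where the argument genuinely breaks, and the fix you sketch does not work. The scalar $c \in \O_F^\times$ with $g^n = cI$ is merely a unit, not a root of unity: for instance $g = \bigl(\begin{smallmatrix} 0 & 1 \\ \varepsilon & 0 \end{smallmatrix}\bigr)$ satisfies $g^2 = \varepsilon I$ for an arbitrary unit $\varepsilon$, giving order-$2$ torsion in $\PGL_2(\O_F)$ all of whose lifts have infinite order when $\varepsilon$ is not a root of unity. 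Since the lemma concerns arbitrary number fields of degree $d$, the unit group in general has positive rank, and then no finite set of congruence conditions can force $c=1$: the image of $\O_F^\times$ in $(\O_F/\q\q')^\times$ is finite, so a suitable power of a fundamental unit is congruent to $1$ modulo both $\q$ and $\q'$ while having infinite order. Hence ``$c \equiv 1$ at two primes plus boundedness of the roots of unity'' yields nothing; your argument as written only covers fields with finite unit group (e.g.\ the imaginary quadratic case). For the same reason, the assertion in your first paragraph that every torsion class of $\PGL_N(\O_F)$ lifts to a torsion element of $\GL_N(\O_F)$ of order dividing $M(d,N)$ is unjustified.

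The paper's (sketched) proof, quoting Gelander and ultimately Minkowski, sidesteps the lifting problem entirely: Minkowski's lemma says that the kernel of $\GL_m(\Z) \to \GL_m(\Z/m')$ is torsion-free for $m' > 2$, and one applies this to a faithful representation of $\Gamma = \PGL_N(\O_F)$ itself on a free $\Z$-module of rank depending only on $d$ and $N$ --- for instance the conjugation action on $M_N(\O_F) \cong \Z^{dN^2}$ --- taking $\Gamma_0 = \Gamma \cap \ker\bigl(\GL_{dN^2}(\Z) \to \GL_{dN^2}(\Z/3)\bigr)$, which is normal, torsion-free, and of index at most $|\GL_{dN^2}(\Z/3)|$, a constant depending only on $d$ and $N$. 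If you want to salvage your route instead, the correct repair is not to force $c=1$: reducing the eigenvalues of $g$ modulo a prime above $\q$ in a splitting field shows that a relation $g^r=cI$ with $r$ prime, $r \neq p$, and $g \equiv I \pmod{\q}$ forces $g$ to be scalar (the min-imal polynomial divides the separable polynomial $x^r-c$, and distinct $r$-th roots of unity stay distinct in residue characteristic $p \neq r$), so any torsion in the image of $\Gamma(\q)$ has $p$-power order; choosing $p$ coprime to the Minkowski bound for finite subgroups of $\GL_{dN^2}(\Z) \supset \PGL_N(\O_F)$ then rules out $p$-torsion in $\PGL_N(\O_F)$ altogether.
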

The result essentially follows from Minkowski's lemma that  asserts
that the reduction map $\GL_n(\Z) \to \GL(\Z / m)$ has torsion-free 
kernel for $m > 2$. This permits to give an explicit value for the 
bound $\gamma$.  See \cite[Sect. 2]{BelEm14} for an modified argument
that provides a better (i.e., lower) value for this constant.


\subsection{}

Let $X$ be the symmetric space associated with $G$, i.e, $X = G/K$ for some maximal compact subgroup 
$K \subset G$. Note that $X$ depends only on $N$ and the signature of $F$.
For $\Gamma_0$ as in Lemma~\ref{lemma:const-gamma}
the quotient $M = \Gamma_0 \bs X$ is a noncompact locally symmetric space. Gelander's  
theorem \cite[Theorem 1.5 (1)]{Gel04} shows that $M$ is homotop to a simplicial complex
of bounded size. Let us define a {\em $(\delta,
v)$-simplicial complex} to be a simplicial complex with at most $v$
vertices, all of valence at most $\delta$.  Then the precise result is the
following.

\begin{theorem}[Gelander]
  There exist  constants $\delta = \delta(X)$ and $\alpha = \alpha(X)$ such that 
  any noncompact arithmetic manifold $M = \Gamma_0\bs X$ is homotopically equivalent to a
  $(\delta, \alpha \vol(M))$-simplicial complex $\mathcal K$.
  \label{thm:Gelander}
\end{theorem}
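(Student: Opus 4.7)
The plan is to use the Margulis lemma to produce a thick-thin decomposition of $M$ and then replace $M$, up to homotopy, by the nerve of a controlled covering of a compact core. Let $\epsilon = \epsilon(X) > 0$ be a Margulis constant for the symmetric space $X$, and let $M_{\geq \epsilon/2} = \{ x \in M : \mathrm{InjRad}(x) \geq \epsilon/2 \}$ denote its $\epsilon/2$-thick part. By the Margulis lemma, each component of the complement is either a Margulis tube around a short closed geodesic or a cusp neighborhood associated with the unipotent radical of a rational parabolic subgroup of the ambient algebraic $\Q$-group defining $\Gamma_0$.

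The first step is to deformation retract $M$ onto a compact submanifold-with-boundary $M^\circ$ that lies within uniformly bounded distance from $M_{\geq \epsilon/2}$. In each cusp the natural geodesic flow toward infinity gives a retraction onto a horospherical cross section, and reduction theory for arithmetic groups (Siegel sets) identifies this cross section with a quotient of a horosphere by a virtually nilpotent group whose diameter is bounded in terms of $X$. It is in this step that the noncompact \emph{arithmetic} hypothesis is essential: arithmeticity controls the shape of the cusp cross sections and gives a bound $\alpha_0 \vol(M)$ on the number of cusps. Any Margulis tubes appearing in the thin part can be handled similarly by retracting onto their cores.

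Next I would select a maximal $(\epsilon/4)$-separated subset $\{x_i\} \subset M^\circ$ and form the cover $\mathcal{U} = \{ B(x_i, \epsilon/2) \}$. The disjoint balls $B(x_i, \epsilon/8)$ embed isometrically from $X$ by control of the injectivity radius on $M^\circ$, so comparing volumes in $X$ bounds the number of centers by $\alpha \vol(M)$ for some $\alpha = \alpha(X)$. A packing argument in $X$ on concentric balls of radius $\epsilon$ shows that each point of $M^\circ$ lies in at most $\delta = \delta(X)$ members of $\mathcal{U}$, so the nerve $\mathcal{K}$ of $\mathcal{U}$ has at most $\alpha \vol(M)$ vertices, each of valence at most $\delta$. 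Since geodesic balls below the convexity radius of $X$ are contractible, and one may perturb the cover to ensure all nonempty intersections are contractible, the nerve lemma yields $\mathcal{K} \simeq M^\circ \simeq M$.

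The main obstacle is the cusp-retraction step. One needs a uniform construction, depending only on $X$ and not on the particular $\Gamma_0$, of a retraction of $M$ onto a core whose size is bounded by $\vol(M)$ alone, and one must verify that the retraction does not force the injectivity radius bound on the core to degenerate in a way that would spoil the packing estimate. This requires mixing the differential-geometric thick-thin picture with arithmetic reduction theory, and is the reason Gelander's constants $\delta, \alpha$ come out nonexplicit from this approach.
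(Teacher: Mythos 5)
The paper offers no proof of this statement: it is Gelander's theorem, quoted from \cite[Theorem 1.5~(1)]{Gel04} and used as a black box, so the only question is whether your outline would itself establish it. Your plan does follow the broad strategy underlying Gelander's actual argument --- Margulis lemma and thick--thin decomposition, a cover of a core by balls of radius comparable to the Margulis constant, a packing estimate giving the vertex count $\alpha \vol(M)$ and the valence bound $\delta$, and the nerve lemma (where, incidentally, no perturbation is needed: embedded balls below the convexity radius in nonpositive curvature are convex, so all intersections are automatically contractible). In that sense you have correctly identified the skeleton of the proof.

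However, the step you defer as ``the main obstacle'' is not a loose end; it is essentially the whole content of the theorem, and the way you sketch it contains claims that fail in the relevant higher-rank setting ($X = \PGL_N(\C)/\PU(N)$ with $N = 2n+1$, so rank $N-1 \ge 4$). First, the rank-one picture of the thin part as ``Margulis tubes around short geodesics plus horoball cusp neighborhoods'' is not accurate in higher rank: components of the thin part are only neighborhoods with virtually nilpotent local fundamental group, of neither of these two simple shapes in general. Second, the assertion that a cusp cross-section is a quotient by a group ``whose diameter is bounded in terms of $X$'' is false: the diameters of cusp cross-sections grow with the arithmetic data (in the application of this paper, with $|D_F|$); only their \emph{volume} is controlled by $\vol(M)$. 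Hence one cannot retract onto a core ``within uniformly bounded distance of the thick part'' and get bounded geometry there for free. Third, and most seriously, the cross-sections and other thin components carry nontrivial topology (they are nilmanifold-type objects), so any core realizing the homotopy type of $M$ must account for regions where the injectivity radius is below the Margulis constant --- and exactly there the lower volume bound for your disjoint $\epsilon/8$-balls, which is what produces the bound $\alpha\vol(M)$ on the number of vertices, breaks down. Showing that the homotopy contributed by each thin component can nevertheless be encoded by a boundedly controlled piece of complex, using the nilpotent structure from the Margulis lemma together with arithmetic reduction theory, is precisely the delicate part of Gelander's proof, and it is absent from your proposal. As written, then, the argument has a genuine gap at its central step.
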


\begin{rmk}
  \label{rmk:normalization-volume}
  It is clear that the result holds for any normalization of the volume
  $\vol$ on $X$, by adapting the constant $\alpha$ accordingly.
\end{rmk}

A result of Gabber allows to bound the torsion homology of a simplicial
complex. We will use this result in the following form. 

\begin{prop}
  \label{prop:Gabber}
  Let $\K$ be a $(\delta, v)$-simplicial complex. Then the torsion
  homology of $\K$ is bounded  as follows: 
  \begin{align}
    \nonumber \log |\tors\, H_n(\K)| &\le v \cdot \binom{\delta}{n} \\
    &\le v \cdot \delta^n
    \label{eq:bound-homology-K}
  \end{align}
\end{prop}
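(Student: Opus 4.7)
The plan is to combine a combinatorial count of the $n$-simplices of $\K$ with a Hadamard-type torsion estimate due to Gabber (the latter having already been used for similar purposes in Soul\'e \cite{Soule03}). First I would fix a vertex $x$ of $\K$; since $x$ has valence at most $\delta$, it has at most $\delta$ neighbors in the $1$-skeleton, and any $n$-simplex containing $x$ must have its remaining $n$ vertices among those neighbors. Thus $x$ lies in at most $\binom{\delta}{n}$ many $n$-simplices. Summing over the $\le v$ vertices and noting that each $n$-simplex is counted $n+1$ times (once for each of its vertices), the number $c_n$ of $n$-simplices of $\K$ satisfies
\[
c_n \le \frac{v}{n+1}\binom{\delta}{n}.
\]

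Next I would apply Gabber's bound to the simplicial chain complex $C_\bullet(\K)$: each $C_k$ is free abelian of rank $c_k$, and $\partial_{n+1}$ is represented by a matrix with entries in $\{-1, 0, 1\}$ having exactly $n+2$ nonzero entries per column. Since $H_n(\K)$ is a subgroup of $\mathrm{coker}\,\partial_{n+1}$, the order of $\tors\, H_n(\K)$ divides the product of the elementary divisors of $\partial_{n+1}$, which in turn divides any nonvanishing $r \times r$ minor of this matrix, with $r = \rank \partial_{n+1} \le c_n$. Hadamard's inequality applied to such a minor, using that each column has norm at most $\sqrt{n+2}$, then gives
\[
|\tors\, H_n(\K)| \le (\sqrt{n+2})^{c_n}.
\]

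Combining this with the estimate for $c_n$, and invoking the elementary inequality $\log(n+2) \le 2(n+1)$ valid for all $n \ge 0$, yields the first inequality of \eqref{eq:bound-homology-K}; the bound $\binom{\delta}{n} \le \delta^n$ is immediate. The main substantive step is the Gabber--Hadamard estimate of the second paragraph; it is classical, but the bookkeeping with constants must be carried out carefully so that the harmless factor $\tfrac{1}{2}\log(n+2)/(n+1)$ gets absorbed into the clean final bound $v\binom{\delta}{n}$.
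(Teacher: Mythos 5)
Your proposal is correct and follows essentially the same route as the paper: bound the number of $n$-simplices by $\frac{v}{n+1}\binom{\delta}{n}$ via the valence condition, then apply Gabber's Hadamard-type estimate to the simplicial boundary map $\partial_{n+1}$ (whose columns have $n+2$ entries equal to $\pm 1$) to get $|\tors\, H_n(\K)| \le \sqrt{n+2}^{\,c_n}$, and absorb the factor $\tfrac{1}{2}\log(n+2)/(n+1)$ into the clean bound. The only difference is that you prove Gabber's lemma inline (Smith normal form plus Hadamard), whereas the paper simply cites it as \cite[Lemma 2.2]{EmeK2}.
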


\begin{proof}
  The number of $n$-simplices in $\K$ is bounded above by $\frac{v}{n+1}
  \binom{\delta}{n}$. By Gabber's lemma (cf.\ \cite[Lemma 2.2]{EmeK2}),
  since the chain complex $\K_\bullet$ is a \emph{simplicial} complex we may bound the torsion 
  homology as follows:
  \begin{align*}
    |\tors \, H_n(\K_\bullet)| &\le \sqrt{n+2}^{\, \mathrm{rank}(\K_n)},
  \end{align*}
  from which the result follows.
\end{proof}

\subsection{}
From Theorem~\ref{thm:Gelander} and Proposition~\ref{prop:Gabber} we obtain
\begin{align}
  \log |\tors\, H_n(\Gamma_0)| &\le \alpha  \delta^n \; \vol(\Gamma_0 \bs
  X)
  \nonumber \\
  &\le \alpha \delta^n \gamma \; \vol(\Gamma \bs X),
  \label{eq:bound-H-Gamma_0}
\end{align}
for constants $\alpha, \gamma$ and $\delta$ that can be chosen to
depend only on $N$ and $d$.

\begin{prop}
  \label{prop:homology-PGL}
  Let $N = 2n+1$
  and $\Gamma = \PGL_N(\O_F)$. For some constant $C(d, n)$, we have the following upper bound for
  the torsion homology:
  \begin{align}
    \log |\tors_\gamma\, H_n(\Gamma)| &\le C(d,n) |D_F|^{2 n(n+1)},
    \label{eq:bound-homology-PGL}
  \end{align}
  where $\gamma = \gamma(d, 2n+1)$ is the constant in
  Lemma~\ref{lemma:const-gamma}.
\end{prop}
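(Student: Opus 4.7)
The plan combines three steps: (1) replace $\Gamma = \PGL_N(\O_F)$ by the torsion-free normal subgroup $\Gamma_0$ from Lemma~\ref{lemma:const-gamma}; (2) apply the already-derived inequality \eqref{eq:bound-H-Gamma_0}, which bounds the torsion homology of $\Gamma_0$ in terms of $\vol(\Gamma \bs X)$; and (3) bound this covolume by a power of $|D_F|$.

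For the first step, I would apply Lemma~\ref{lemma:homology-finite-index-sbgp} with $\ell = \gamma$. Since $[\Gamma : \Gamma_0] \le \gamma$, this yields an injection $H_n(\Gamma, \Z[1/m]) \hookrightarrow H_n(\Gamma_0, \Z[1/m])$, where $m$ is the lcm of the integers up to $\gamma$. Passing to torsion parts, $\tors_\gamma H_n(\Gamma)$ embeds into $\tors H_n(\Gamma_0)$, so it suffices to bound the latter. The second step is then immediate from \eqref{eq:bound-H-Gamma_0}, since the constants $\alpha, \delta, \gamma$ depend only on $d$ and $n$: everything reduces to bounding $\vol(\Gamma \bs X)$.

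The third step---the heart of the argument and the main obstacle---is to bound the covolume of $\PGL_N(\O_F)$ in $G = \PGL_N(F \otimes_\Q \R)$. For this I would invoke Prasad's volume formula applied to the arithmetic lattice $\Gamma = \PGL_N(\O_F)$. With a suitable normalization of the Haar measure (which only affects the constant $\alpha$; cf.\ Remark~\ref{rmk:normalization-volume}), Prasad's formula expresses $\vol(\Gamma \bs X)$, up to uniformly bounded local correction factors, as
\[
c(d,N) \cdot |D_F|^{(N^2-1)/2} \cdot \prod_{k=2}^{N} \zeta_F(k),
\]
where $(N^2-1)/2 = \dim(\mathbf{G})/2$ for $\mathbf{G} = \PGL_N$ viewed as an $F$-group. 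Using the elementary estimate $\zeta_F(k) \le \zeta(k)^d$ for $k \ge 2$, the entire $\zeta_F$-product is absorbed into a constant depending only on $d$ and $N$. Since $(N^2-1)/2 = 2n(n+1)$ when $N = 2n+1$, combining the three steps gives the stated bound with some $C(d,n)$.

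The main technical subtlety is that Prasad's formula is naturally stated for principal arithmetic subgroups of simply connected groups, whereas $\PGL_N(\O_F)$ is of adjoint type. This can be addressed either by comparing $\Gamma$ with the image of $\SL_N(\O_F)$ in $\PGL_N(F)$---which has index in $\Gamma$ bounded in terms of $N$ and the roots of unity of $F$, and for which the classical Harder--Siegel formula for $\SL_N$ applies---or by directly checking that the local correction factors at the finite places of $F$ are uniformly bounded in the discriminant.
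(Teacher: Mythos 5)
Your proposal is correct and follows essentially the same route as the paper: reduce to the torsion-free subgroup $\Gamma_0$ via Lemma~\ref{lemma:homology-finite-index-sbgp}, apply \eqref{eq:bound-H-Gamma_0}, and bound the covolume by Prasad's formula, with $(N^2-1)/2 = 2n(n+1)$ for $N=2n+1$. The only difference is in the last technical point: the paper simply uses the containment of the image of $\SL_N(\O_F)$ in $\Gamma$, which gives $\vol(\Gamma\bs X)\le\vol(\PSL_N(\O_F)\bs X)$ directly, so no bound on the index (your ``roots of unity'' remark) is actually needed for an upper bound.
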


\begin{proof}
  By Section~\ref{sec:prelim-tors-homol} and
  Lemma~\ref{lemma:homology-finite-index-sbgp}, it is enough to bound 
  \begin{align*}
    |\tors\,H_n(\Gamma_0, \Z[1/m])| &= |\tors\, H_n(\Gamma_0) \otimes
    \Z[1/m]|\\
    &\le |\tors\, H_n(\Gamma_0)|.
  \end{align*}
  Using~\eqref{eq:bound-H-Gamma_0}, it remains to bound the covolume of
  $\Gamma = \PGL_N(\O_F)$. It is certainly bounded by the covolume of $\PSL_N(\O_F)$. 
  We can choose (cf. Remark~\ref{rmk:normalization-volume}) 
  to work with the  normalization of the volume used in Prasad's volume formula: 
  let us assume that 
  \begin{align}
          \vol(\PSL_N(\O_F)\bs X) = \mu(\SL_N(\O_F) \bs \tilde{G}),
          \label{eq:normal-vol}
  \end{align}
  where $\mu = \mu_\infty = \mu_S$ is the Haar measure on $\tilde{G} = \SL_N(F \otimes_\Q \R)$
  defined in \cite[\S 3.6]{Pra89}. Then the volume formula \cite[Theorem 3.7]{Pra89} shows that 
  \begin{align*}
    \vol(\PSL_N(\O_F) \bs X) &=  A \cdot |D_F|^{\frac{N^2-1}{2}} \prod_{k=2}^{N} \zeta_F(k),
    \end{align*}
  for some (explicit) constant $A$ that depends only on $N$ and the signature of $F$. 
  The product of zeta functions is easily shown to be $\le 2^{N-1}$, so that
  the result follows.
\end{proof}

\begin{rmk}
  Note that if we had explicit constants $\alpha$ and $\delta$ in
  Theorem~\ref{thm:Gelander} (for some
  explicit normalization of the volume), the procedure  would permit to
  make the constant $C(d, n)$ explicit.
\end{rmk}

\section{Upper bounds for the $K$-groups}
\label{sec:GL-PGL}

We finally state and prove the following result, of which
Theorem~\ref{thm:main} will be an immediate consequence (see proof below).
Recall that the notation $\tors_\gamma$ was introduced in Section~\ref{sec:tors-ell}.

\begin{prop}
  Let $n \ge 2$.
  There are constants $C(n)$ and $\gamma = \gamma(n)$  such that for any imaginary 
  quadratic field $F$, one has
  \begin{align}
    \log|\tors_\gamma\, K_n(\O_F)| &\le C(n) |D_F|^{2n(n+1)}.
    \label{eq:bound-Kn}
  \end{align}
  \label{prop:bound-Kn}
\end{prop}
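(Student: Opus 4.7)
The plan is to chain Propositions~\ref{prop:Kn-H-GL} and~\ref{prop:homology-PGL}, using a spectral sequence argument to bridge the gap between $\GL_N$ and $\PGL_N$ homology. The imaginary quadratic hypothesis enters precisely at this bridge step, where it forces $\O_F^\times$ to be finite.

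Set $N = 2n+1$, and let $\gamma = \gamma(n)$ be chosen larger than all of $(n+1)/2$, $6$, and $\gamma(2, 2n+1)$ (the constant from Lemma~\ref{lemma:const-gamma} with $d = 2$). By Proposition~\ref{prop:Kn-H-GL}, the group $\tors_\gamma\, K_n(\O_F)$ injects into $\tors_\gamma\, H_n(\GL_N(\O_F))$, so it is enough to bound the latter.

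To relate this to the homology of $\PGL_N(\O_F)$, I would use the short exact sequence
\begin{align*}
1 \to \O_F^\times \to \GL_N(\O_F) \to \PGL_N(\O_F) \to 1.
\end{align*}
For $F$ imaginary quadratic, $\O_F^\times$ is a finite cyclic group of order $w \in \{2, 4, 6\}$. Let $m$ be the lcm of the integers up to $\gamma$; by choice of $\gamma \ge 6$, $w$ is invertible in $\Z[1/m]$. The Lyndon--Hochschild--Serre spectral sequence of the extension, taken with coefficients in $\Z[1/m]$, has
\begin{align*}
E^2_{p, q} = H_p(\PGL_N(\O_F), H_q(\O_F^\times, \Z[1/m])) \Longrightarrow H_{p+q}(\GL_N(\O_F), \Z[1/m]),
\end{align*}
and the coefficient groups $H_q(\O_F^\times, \Z[1/m])$ vanish for $q > 0$. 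The sequence therefore collapses to an isomorphism $H_n(\GL_N(\O_F), \Z[1/m]) \cong H_n(\PGL_N(\O_F), \Z[1/m])$, which by flatness of $\Z[1/m]$ identifies $\tors_\gamma\, H_n(\GL_N(\O_F))$ with $\tors_\gamma\, H_n(\PGL_N(\O_F))$. Proposition~\ref{prop:homology-PGL} with $d = 2$ then delivers the bound $C(2, n)\,|D_F|^{2n(n+1)}$, so that one may take $C(n) = C(2, n)$.

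The only real subtlety I foresee lies in setting up the short exact sequence cleanly: the notation $\PGL_N(\O_F)$ may denote either the image of $\GL_N(\O_F)$ in $\PGL_N(F \otimes_\Q \R)$ (for which the sequence is exact by construction) or the $\O_F$-points of the group scheme $\PGL_N$, the two possibly differing by a finite index governed by the class group of $\O_F$. In either interpretation Lemma~\ref{lemma:homology-finite-index-sbgp} allows us to pass between them after inverting that finite index, at the price of a further harmless enlargement of $\gamma(n)$.
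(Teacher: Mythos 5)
Your proposal is correct and is essentially the paper's own proof: reduce to $\tors_\gamma\, H_n(\GL_N(\O_F))$ via Proposition~\ref{prop:Kn-H-GL}, collapse the Lyndon--Hochschild--Serre spectral sequence of the central extension $1 \to \O_F^\times \to \GL_N(\O_F) \to \PGL_N(\O_F) \to 1$ with $\Z[1/m]$-coefficients using the finiteness of $\O_F^\times$, and apply Proposition~\ref{prop:homology-PGL} with $d = 2$. One caveat on your closing remark: if one did take $\PGL_N(\O_F)$ to mean the $\O_F$-points of the group scheme, the index of the image of $\GL_N(\O_F)$ is controlled by (torsion in) the class group of $F$, which is \emph{not} bounded in terms of $n$ as $F$ varies, so it cannot be absorbed into a constant $\gamma(n)$ via Lemma~\ref{lemma:homology-finite-index-sbgp}; fortunately this branch is never needed, since the image interpretation (for which the sequence is exact by construction) is the one for which Lemma~\ref{lemma:const-gamma} and the volume bound in Proposition~\ref{prop:homology-PGL} apply directly.
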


\begin{proof}
  With $\gamma$ as in Lemma~\ref{lemma:const-gamma} and larger than
  $\frac{n+1}{2}$, we have from Proposition~\ref{prop:Kn-H-GL}:
  \begin{align}
    |\tors_\gamma\, K_n(\O_F)| &\le |\tors_\gamma\, H_n(\GL_N(\O_F))|
    \nonumber \\
    &= |H_n(\GL_N(\O_F), \Z[1/m])|,
  \end{align}
  with $N = 2n+1$ and $m$ the lowest multiple common to all integers
  $\le \gamma$.

  With $\Gamma = \PGL_N(\O_F)$, we have the short exact sequence 
  \begin{align}
   1 &\to \O_F^\times \to \GL_N(\O_F) \to \Gamma \to 1, 
    \label{eq:exact-seq-PGL}
  \end{align}
  and for it the Lyndon-Hochschild-Serre spectral sequence (cf.\
  \cite[\S VII.6]{Brown82}) reads: 
  \begin{align}
    E_{pq}^2 = H_p\left( \Gamma, H_q(\O_F^\times, \Z[1/m]) \right)  &\implies
    H_{p+q}\left(\GL_N(\O_F), \Z[1/m] \right).
    \label{eq:spect-seq-2}
  \end{align}
  Since we assume that $F$ is imaginary quadratic, we have that
  $\O_F^\times$ is finite, of order $\le 6$. Then $H_q(\O_F^\times,
  \Z[1/m]) = 0$ for $q > 0$, and the sprectral sequence is concentrated
  in $q = 0$. Note also that since the image of $\O_F^\times$ is central
  in $\GL_N(\O_F)$, the action of $\Gamma$ on $H_0(\O_F^\times, \Z[1/m]) =
  \Z[1/m]$ is trivial (cf. \cite[ex.~1 p.80]{Brown82}). We obtain:
  \begin{align*}
    H_n(\Gamma, \Z[1/m]) &= H_n(\GL_N(\O_F), \Z[1/m]),
  \end{align*}
  and the result follows from Proposition~\ref{prop:homology-PGL} with
  the same constant $C(n) = C(2, n)$.
\end{proof}

\begin{rmk}
        As it follows from the discussion in Section~\ref{sec:gamma}, the constant $\gamma$ can 
        be easily made explicit -- contrarily to $C(n)$. 
\end{rmk}

\begin{rmk}
        \label{rmk:other-fields}
        For $F$ with more than one archimedean place, the spectral sequence~\eqref{eq:spect-seq-2} 
        does not collapse at $E^2_{pq}$. A priori the differential $\del_r$ (with $r \ge 2$) might 
        add torsion, whenever both its domain and image are infinite. It seems difficult to control 
        this phenomena to obtain upper bounds in this more general setting.
\end{rmk}

\begin{proof}[Proof of Theorem~\ref{thm:main}]
  In Proposition~\ref{prop:bound-Kn}, we can assume that $C(n) \ge
  \log(\gamma)$ (by increasing the constant if necessary).
  Let $s \in K_n(\O_F)$ be an element of oder $p$, with $p$ prime such
  that $\log(p) > C(n) |D_F|^{2n(n+1)}$. In particular, 
  we have  $p > \gamma$, so that the image of $s$  in
  $\tors_\gamma\, K_n(\O_F)$ is non-trivial. But this contradicts the
  bound~\eqref{eq:bound-Kn}.
\end{proof}

\bibliographystyle{amsplain}
\bibliography{Kn-PGL.bbl}

\end{document}